\newcommand{\R}{\ensuremath{\mathbb{R}}}
\newcommand{\N}{\ensuremath{\mathbb{N}}}
\newcommand{\Z}{\ensuremath{\mathbb{Z}}}
\DeclareMathOperator{\dist}{\textnormal{dist}}
\let\eps\varepsilon
\DeclareMathOperator{\sgn}{sgn}
\newtheorem{theorem}{Theorem}[section]
\newtheorem{lemma}[theorem]{Lemma}
\newtheorem{proposition}[theorem]{Proposition}
\numberwithin{theorem}{section}
\numberwithin{definition}{section}
\theoremstyle{remark}
\newtheorem{remark}[theorem]{Remark}
\newcommand{\limplus}{{\mathchoice{\vcenter{\hbox{$\scriptstyle +$}}}
  {\vcenter{\hbox{$\scriptstyle +$}}}
  {\vcenter{\hbox{$\scriptscriptstyle +$}}}
  {\vcenter{\hbox{$\scriptscriptstyle +$}}}
}}
\begin{document}

\title[Discrete Schr\"odinger operators]{Discrete Schr\"odinger operators with decaying and oscillating potentials}

\author[R. L. Frank]{Rupert L. Frank}
\address{\textnormal{(R. L. Frank)} Mathematisches Institut, Ludwig-Maximilians Universit\"at M\"unchen, Theresienstr. 39, 80333 M\"unchen, Germany, and Munich Center for Quantum Science and Technology (MCQST), Schellingstr. 4, 80799 M\"unchen, Germany, and Department of Mathematics, California Institute of Technology, Pasadena, CA 91125, USA}
\email{r.frank@lmu.de, rlfrank@caltech.edu}

\author[S. Larson]{Simon Larson}
\address{\textnormal{(S. Larson)} Mathematical Sciences, Chalmers University of Technology and the University of Gothenburg, SE-41296 Gothenburg, Sweden}
\email{larsons@chalmers.se}

\dedicatory{In memory of Sergey Naboko}

\subjclass[2010]{}
\keywords{}

\thanks{\copyright\, 2022 by the authors. This paper may be
reproduced, in its entirety, for non-commercial purposes.\\
Partial support through U.S. National Science Foundation grants DMS-1363432 and DMS-1954995 (R.L.F.), through the Deutsche For\-schungs\-gemeinschaft (DFG, German Research Foundation) through Germany’s Excellence Strategy EXC - 2111 - 390814868 (R.L.F.), and through Knut and Alice Wallenberg Foundation grant KAW~2018.0281 and KAW~2021.0193 (S.L.) are acknowledged.}

\begin{abstract} 
  We study a family of discrete one-dimensional Schr\"odinger operators with power-like decaying potentials with rapid oscillations. In particular, for the potential $V(n)=\lambda n^{-\alpha}\cos(\pi \omega n^\beta)$, with $1<\beta<2\alpha$, we prove that the spectrum is purely absolutely continuous on the spectrum of the Laplacian. 
\end{abstract}

\maketitle

\section{Introduction}

We are interested in the nature of the spectrum of certain Schr\"odinger operators on $\ell^2(\Z_\limplus)$ with decaying and oscillating potentials. Our main interest is the family of Schr\"o\-din\-ger operators whose potential at site $n$ is given by
\begin{equation}\label{eq: V polynomially pseudo random}
  \lambda\frac{\cos(\pi\omega n^\beta)}{n^\alpha}
\end{equation}
where $\omega \in \R\setminus \{0\}$, $\lambda \in \R$, $\alpha\geq 0$, and $\beta >0$ are fixed. It is known that the structure of the spectrum of these Schr\"odinger operators depends in a non-trivial manner on the parameters. The most studied case is that when $\beta=1, \alpha =0$ corresponding to the almost Mathieu operator. Here we are interested in the significantly easier case when $\alpha$ is positive so that the potential decays at infinity. Since in this case the potential tends to zero it immediately follows that the essential spectrum of the operator is equal to the essential spectrum of the Laplacian. While the computation of the essential spectrum is immediate, understanding its precise spectral nature is far from trivial; specifically, we would like to understand the absence or presence of absolutely continuous, singular continuous, and pure point spectrum depending on the parameters of the problem. 

If the fluctuations of the potential induced by the $\cos(\pi \omega n^\beta)$ are instead given by independent random variables with mean zero, it is known that the essential spectrum is almost surely absolutely continuous if $\alpha >1/2$, dense pure point if $\alpha<1/2$, and when $\alpha=1/2$ there is a transition from pure point to purely singular continuous spectrum with respect to the energy and the coupling constant $\lambda$~\cite{Delyon_etal_JPhysA_1983,Simon_CMP_83,DelyonSimonSouillard_PRL84,delyonSimonSouillard_AHP85,KotaniUshiroya_CMP_1988,KiselevLastSimon_EFGP_1998}. If the deterministic oscillations induced by the the cosine suffice to reproduce this behavior is not clear, but it is natural to conjecture that this should be the case for most values of $\omega, \beta$, at least in the non-critical case $\alpha \neq 1/2$. 

While there have been several contributions towards this problem in the past, a full solution is still missing. It is worth mentioning that the continuum Schr\"odinger operator analogue of this problem is completely understood with the final cases settled by White almost 40 years ago~\cite{White_TAMS1983}. For references on the many contributions leading up to the final result, we refer to White's paper; see also~\cite{Jecko}. Unfortunately, the techniques applied in the continuum case which, to a certain degree, all rely on clever changes of variables and integration by parts, have limited applicability in the discrete case. In fact, as we shall discuss later on, the spectral properties of the continuum and discrete models are significantly different at least for certain values of the parameters.

Our modest contribution in this short note is to show that if $1< \beta <2\alpha$, then the spectrum in $(-2, 2)$ is purely absolutely continuous. Under these assumptions it follows from the results of Christ--Kiselev~\cite{ChristKiselev_JAMS_1998} (or rather its discrete analogue), Deift--Killip~\cite{DeiftKillip_CMP1999}, or Remling~\cite{Remling_CMP1998} that the absolutely continuous spectrum coincides with $[-2, 2]$ (see also~\cite{Kiselev_CMP_96,kiselev_stability_1998}). Therefore, our contribution is to show that the essential spectrum is \emph{purely} absolutely continuous. We emphasize that this is not generally the case for Schr\"odinger operators covered by the Christ--Kiselev, Deift--Killip, and Remling result. Indeed, Naboko~\cite{Naboko_86} and Simon~\cite{Simon_PAMS97} have constructed examples of continuum Schr\"odinger operators with potentials $q$ such that $xq(x)$ tends to infinity arbitrarily slowly for which the absolutely continuous spectrum is $[0, \infty)$ and the point spectrum is dense in $[0, \infty)$.

While our main interest is towards the Schr\"odinger operators with potential given by~\eqref{eq: V polynomially pseudo random}, our proof only depends on rather weak properties of the potential and will cover a larger class of Schr\"odinger operators. Specifically we shall consider potentials that for some finite $m$ can be written as
\begin{equation}\label{eq: V representation}
  V(n) = \sum_{j=1}^m \frac{\lambda_j\cos(\phi_j(n))}{n^{\alpha_j}} + V_0(n)
\end{equation}
where 
\begin{enumerate}
  \item $\alpha_j\in (1/2, 1]$ with $\alpha_j\leq \alpha_{j+1}$,
  \item $\phi_j$ can be written as $\phi_j = \phi_j^0 + \phi_j^1$ where $\phi_j^0 \in C^2(\R_\limplus)$ and there exist $\beta_j \in (1, 2\alpha_j)$, $\omega_j \in \R\setminus \{0\}$, and $\gamma_j >1-\alpha_j$ such that
  \begin{align}\label{eq: phase assumption}
    |\partial^k_x(\phi_j^0(x) - \pi \omega_j x^{\beta_j})|&= o_{x\to \infty}(x^{\beta_j-k})\quad \mbox{for }k=0, 1, 2, \, \mbox{and}\\
    |\phi_j^1(n+1)-\phi_j^1(n)| &\lesssim n^{-\gamma_j}\,, \nonumber
  \end{align}
  
  \item $\lambda_j \in \R$, and
  \item $\{V_0(n)\}_{n\geq 1}\in \ell^1$.
\end{enumerate}
We emphasize that the important assumption here is that $1<\beta_j<2\alpha_j$, so that $V$ is a linear combination of potentials asymptotically resembling~\eqref{eq: V polynomially pseudo random} with $1<\beta< 2\alpha$. The assumption that $\alpha_j\leq 1$ is only for later convenience. Indeed, if $\alpha_j>1$ for some $j$ then this term can be included in $V_0$.

For a boundary condition $\mu \in \R$ and a potential $V$ define the half-line Schr\"odinger operator $H_{V}^{\mu} = \Delta + V$ in $\ell^2(\Z_\limplus)$ by
\begin{equation*}
  (H_{V}^{\mu}\psi)_n=
  \begin{cases}
    \psi_{n+1}+\psi_{n-1} + V(n)\psi_n\,, & n\geq 2\\
    \psi_0 + (V(1)+\mu)\psi_1\,, & n=1\,.
  \end{cases}
\end{equation*}
Since $\lim_{n\to \infty}V(n)=0$ it holds that $\sigma_{ess}(H^\mu_V)=\sigma_{ess}(\Delta)=[-2, 2]$.

Our main result is the following theorem.
\begin{theorem}\label{thm: pure ac}
  If $V$ is of the form~\eqref{eq: V representation}, then $H_V^\mu$ has purely absolutely continuous spectrum in $(-2, 2)$.
\end{theorem}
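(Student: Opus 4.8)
The plan is to deduce the theorem from the uniform boundedness of the transfer matrices. Since $\alpha_j>1/2$ for every $j$, the potential $V$ lies in $\ell^2(\Z_\limplus)$, so the discrete analogue of the Deift--Killip theorem already shows that the absolutely continuous spectrum coincides with $[-2,2]$; what remains is to exclude singular spectrum in $(-2,2)$. By the (discrete) Gilbert--Pearson subordinacy theory together with the Last--Simon criterion, it suffices to prove that for every compact $K\subset(-2,2)$ there is a constant $C_K$ with $\sup_n\|T_n(E)\|\le C_K$ for a.e.\ (in fact every) $E\in K$, where $T_n(E)$ are the transfer matrices of $H_V^\mu\psi=E\psi$. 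Boundedness of all transfer matrices at $E$ forces every solution to be bounded, hence none is subordinate, so $E$ carries no singular weight; the absolutely continuous part then fills $(-2,2)$.

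To estimate $\|T_n(E)\|$ I would pass to EFGP/Prüfer variables. Writing $E=2\cos k$ with $k$ in a compact subinterval of $(0,\pi)$, so that $\sin k$ is bounded below, one represents a solution by an amplitude $R_n$ and a phase $\theta_n$ with $\|T_n\|\asymp R_n/R_1$ and
\[
  \log R_{n+1}^2-\log R_n^2 = -\frac{V(n)}{\sin k}\sin 2\theta_n + \frac{V(n)^2}{\sin^2 k}\Bigl(\tfrac14+\text{osc}\Bigr)+O(V(n)^3),\qquad \theta_{n+1}=\theta_n+k+O(V(n)).
\]
Thus it is enough to show that the telescoping sum $\sum_n(\log R_{n+1}^2-\log R_n^2)$ converges uniformly for $E\in K$. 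The purely quadratic, non-oscillating contribution is $\asymp\sum_n V(n)^2\asymp\sum_n n^{-2\alpha_j}$, which converges precisely because $\alpha_j>1/2$; the oscillating pieces of the second-order term decay faster and are treated as below.

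The heart of the matter is the first-order sum $\sum_n \frac{V(n)}{\sin k}\sin 2\theta_n$. Using $\theta_n=nk+\eta_n$ and $V(n)\sim \lambda_j n^{-\alpha_j}\cos(\pi\omega_j n^{\beta_j})$, a product-to-sum expansion reduces this to exponential sums $\sum_n n^{-\alpha_j}e^{i(\pi\omega_j n^{\beta_j}\pm 2nk)}$, times slowly varying factors coming from $\eta_n$ and $\phi_j^1$. Here lies the key discrete phenomenon: because $n$ is an integer the phase is read modulo $2\pi$, so the local frequency $\pi\omega_j\beta_j n^{\beta_j-1}$ meets the resonant value $2k$ not once but along an infinite sequence $n_\ell\asymp \ell^{1/(\beta_j-1)}$ determined by $\pi\omega_j\beta_j n^{\beta_j-1}\equiv 2k \pmod{2\pi}$. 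Away from these points the first-derivative (Kusmin--Landau) test gives summable contributions since $\alpha_j+\beta_j>1$, while near each $n_\ell$ the second-derivative van der Corput / stationary-phase estimate, using $|\partial_x^2(\pi\omega_j x^{\beta_j})|\asymp x^{\beta_j-2}$, produces a contribution of size $\asymp n_\ell^{\,1-\alpha_j-\beta_j/2}$. Summing over resonances gives $\sum_\ell \ell^{(1-\alpha_j-\beta_j/2)/(\beta_j-1)}$, which converges \emph{exactly} when $\beta_j<2\alpha_j$. This is the main obstacle and the precise point where the hypothesis $1<\beta_j<2\alpha_j$ enters; it is also the mechanism absent in the continuum problem, where only a single resonance occurs.

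It remains to close the argument self-consistently and to absorb the lower-order data. The phase correction $\eta_n$ obeys the same type of oscillatory recursion, so a bootstrap shows that $\eta_n$ converges with $\partial\eta_n=o(n^{\beta_j-1})$, which justifies treating $e^{2i\eta_n}$ as a slowly varying factor via summation by parts in the estimates above; assumptions~\eqref{eq: phase assumption} guarantee that replacing $\phi_j$ by $\pi\omega_j n^{\beta_j}$ (the error having derivatives $o(n^{\beta_j-k})$) and inserting $\phi_j^1$ (with increments $\lesssim n^{-\gamma_j}$, $\gamma_j>1-\alpha_j$) only perturb these sums by convergent series. The finitely many frequencies are handled termwise, the cross terms in $V^2$ producing combined phases to which the same resonance count applies — resonant coincidences with $\beta_i=\beta_j$ and $\omega_i=\omega_j$ give non-oscillating terms of order $n^{-\alpha_i-\alpha_j}$, summable since $\alpha_i+\alpha_j>1$ — and the $\ell^1$ remainder $V_0$ contributes an absolutely convergent sum. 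Uniform convergence for $E\in K$ then yields $\sup_n\|T_n(E)\|<\infty$ throughout $(-2,2)$, and hence the claimed pure absolute continuity.
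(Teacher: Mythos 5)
Your overall strategy coincides with the paper's: modified Pr\"ufer/EFGP variables, reduction of $\log R$ to exponential sums with phase $\phi_j(n)\pm 2kn$ perturbed by $\phi_j^1+2\eta$ and amplitude $n^{-\alpha_j}$, the Kuzmin--Landau test away from the points where the local frequency hits $2\pi\Z$, a trivial bound on a window around each such resonance, and a sum over the resonances $n_\ell\asymp\ell^{1/(\beta_j-1)}$ whose convergence is exactly the condition $\beta_j<2\alpha_j$. Your resonance count and the exponent $n_\ell^{1-\alpha_j-\beta_j/2}$ per resonance agree with the paper's Theorem~\ref{thm: main exp sum bound} and its use in Section~4.

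The one genuine gap is your treatment of the Pr\"ufer-phase factor $e^{2i\eta_n}$. The bound you invoke, $\delta\eta_n=o(n^{\beta_j-1})$, is vacuous (the right-hand side grows); what the recursion actually gives is $|\eta(n+1)-\eta(n)|\lesssim n^{-\alpha_1}+|V_0(n)|$, cf.~\eqref{eq: eta continuity bound}. Summation by parts against the partial sums of $e^{i\phi(n)}$ brings in the \emph{total variation} of $e^{i(\phi_j^1(n)+2\eta(n))}n^{-\alpha_j}$ over each resonant block; since the block around $Y_l\asymp l^{1/(\beta_j-1)}$ has length $\asymp Y_l^{2-\beta_j}$, that variation is $\asymp Y_l^{2-\beta_j-\min\{\gamma_j,\alpha_1\}}$, which is unbounded whenever $\min\{\gamma_j,\alpha_1\}<2-\beta_j$ (e.g.\ $\alpha_j=0.6$, $\beta_j=1.1$). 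Convergence of $\eta_n$ --- even granting your bootstrap, which itself presupposes the exponential-sum bounds --- does not give bounded variation, so ``slowly varying'' cannot be taken for granted here; this is precisely the complication the paper flags as the case $2-\beta>\gamma$. The paper closes it by splitting each block $(Y_l,Y_{l+1}]$ into finitely many sub-intervals at distances $l^{\sigma_k}$ from the resonance and applying Kuzmin--Landau plus summation by parts on each piece separately; this costs an extra factor $l^{\frac{2-\beta_j-2\gamma}{2(\beta_j-1)}+\eps}$ in the per-block estimate, and the resulting series over $l$ still converges precisely because $\gamma=\min\{\gamma_j,\alpha_1\}>1-\alpha_j$ (using $\alpha_1>1/2$ and the hypothesis $\gamma_j>1-\alpha_j$). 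You need this multi-scale decomposition, or an equivalent device, before the summation-by-parts step in your argument is legitimate.
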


To clarify how our contribution fits into existing results for $V$ given by~\eqref{eq: V polynomially pseudo random}, we gather what we have been able to find in the literature in the following proposition.
\begin{proposition}\label{prop: known results}
  Let $H_{\omega, \alpha, \beta}^{\lambda, \mu}$ be the Schr\"odinger operator with potential
  \begin{equation*}
    V(n) = \lambda \frac{\cos(\pi \omega n^\beta)}{n^\alpha}  
  \end{equation*}
  and boundary condition parametrized by $\mu \in \R$.
  It holds that
  \begin{enumerate}
    \item If $\beta \in (0, 1)$ and $\alpha >0$, then $H^{\lambda,\mu}_{\omega,\alpha,\beta}$ has purely absolutely continuous spectrum in $(-2, 2)$.

    \item If $\beta=1$ and $\alpha >0$, then $\sigma_{ac}(H^{\lambda,\mu}_{\omega,\alpha,1})= [-2, 2]$, $\sigma_{sc}(H^{\lambda,\mu}_{\omega,\alpha,1})= \emptyset$, and 
    $$
    \sigma_{pp}(H^{\lambda,\mu}_{\omega,\alpha,1})\cap [-2, 2] \subseteq \{2\cos(j\omega/2): j =0, \ldots, \lfloor 1/\alpha\rfloor\}\,.
    $$

    \item If $\beta\in (1, 2)$ and $0< \alpha < \frac{2-\beta}{2}$, then $\sigma_{ac}(H^{\lambda,\mu}_{\omega,\alpha,\beta})=\emptyset$. Furthermore, for almost every (but not every) $\mu$ the spectrum is pure point.

    \item If $\alpha >1/2$, then $ \sigma_{ac}(H^{\lambda,\mu}_{\omega,\alpha,\beta}) = [-2, 2]$.

    \item If $\alpha >1$, then $H^{\lambda,\mu}_{\omega,\alpha,\beta}$ has purely absolutely continuous spectrum in $(-2, 2)$.
  \end{enumerate}
\end{proposition}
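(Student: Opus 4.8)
Since Proposition~\ref{prop: known results} assembles results of varying difficulty, the plan is to treat each item by identifying the mechanism that governs the Pr\"ufer/EFGP transfer-matrix dynamics at energy $E=2\cos k$, $k\in(0,\pi)$, working on an arbitrary compact subinterval of $(-2,2)$ and letting it exhaust the band at the end. Two items are soft. When $\alpha>1/2$ the potential lies in $\ell^2(\Z_\limplus)$, so the discrete Deift--Killip theorem~\cite{DeiftKillip_CMP1999} (equivalently~\cite{ChristKiselev_JAMS_1998, Remling_CMP1998}, as recalled in the introduction) gives $\sigma_{ac}=[-2,2]$, which is exactly (4) and says nothing about purity. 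When $\alpha>1$ the potential lies in $\ell^1(\Z_\limplus)$, and (5) follows from classical short-range scattering theory: the wave operators exist and are complete, $\sigma_{sc}=\emptyset$, and eigenvalues can accumulate only at $\pm2$, so the spectrum is purely absolutely continuous on $(-2,2)$.

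For (1) and (2) I would run the EFGP transform~\cite{KiselevLastSimon_EFGP_1998} and control the oscillatory sums $\sum_{n\le N} n^{-\alpha} e^{i(\pm 2kn\pm\pi\omega n^\beta)}$ (together with their harmonics) that drive $\log R_N$. In case (1), since $\beta<1$ the phase derivative $\pm2k\pm\pi\omega\beta n^{\beta-1}$ tends to $\pm 2k$, which on a compact subinterval of $(-2,2)$ is bounded away from $2\pi\Z$; a Dirichlet/van der Corput estimate for $\sum e^{i\psi(n)}$ followed by Abel summation against $n^{-\alpha}$ shows these sums converge, so the transfer matrices remain bounded and the spectrum is purely absolutely continuous. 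In case (2) the derivative is the constant $\pm 2k\pm\pi\omega$, nonzero modulo $2\pi$ except at the resonant quasi-momenta; away from these the same argument gives bounded transfer matrices and hence $\sigma_{ac}=[-2,2]$ with no singular spectrum, while the nonlinearity of the Pr\"ufer dynamics produces order-$j$ resonances whose effective coupling decays like $n^{-j\alpha}$. A Wigner--von Neumann eigenvalue can be created only when this decay is critical or slower, i.e. when $j\alpha\le1$; this empties $\sigma_{sc}$ and confines $\sigma_{pp}\cap[-2,2]$ to the finite list of energies $2\cos k$ at resonances of order $j=0,\dots,\lfloor 1/\alpha\rfloor$, which is (2).

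Case (3) is the genuinely discrete regime and requires a different idea. For $\beta\in(1,2)$ the instantaneous frequency $\pi\omega\beta n^{\beta-1}$ grows without bound, but on the lattice frequencies are only defined modulo $2\pi$; consequently the frequency sweeps through the resonant value $2k$ infinitely often, at points $n_\ell$ with spacing $\sim n_\ell^{2-\beta}$. Near each $n_\ell$ the oscillatory sum has a stationary-phase point of width $\sim n_\ell^{(2-\beta)/2}$ (the phase has second derivative $\sim n^{\beta-2}$), contributing $\sim n_\ell^{-\alpha}\cdot n_\ell^{(2-\beta)/2}=n_\ell^{(2-\beta)/2-\alpha}$, and the hypothesis $\alpha<(2-\beta)/2$ is exactly the statement that these contributions \emph{grow}. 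Summing them forces the transfer matrices to grow, so that no solution is subordinate of the type required for absolutely continuous spectrum and $\sigma_{ac}=\emptyset$ by the Gilbert--Pearson criterion. This aliasing is absent in the continuum, where a single stationary point yields only a bounded perturbation and a.c. spectrum survives; it is precisely what separates the discrete model from White's continuum result~\cite{White_TAMS1983}. For the pure-point statement I would combine this with rank-one perturbation theory: varying the boundary condition $\mu$ is a rank-one perturbation, so once one shows that for Lebesgue-a.e. $E$ there is an $\ell^2$ solution, the Simon--Wolff criterion yields pure point spectrum for a.e. $\mu$, while the generic del Rio--Jitomirskaya--Last--Simon dichotomy forces an exceptional (dense $G_\delta$) set of $\mu$ carrying purely singular continuous spectrum, accounting for the parenthetical ``but not every''.

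The main obstacle is (3). Absence of absolutely continuous spectrum is non-perturbative, and the delicate point is that for small $\alpha$ each aliased resonance is individually weak; the real work is to show that their cumulative effect produces genuine growth of the transfer-matrix norms rather than mere boundedness, and to do so uniformly enough in $E$ to feed the subordinacy and Simon--Wolff machinery. This is also the step for which the least is available off the shelf, the other four items being either classical or direct consequences of the cited theorems.
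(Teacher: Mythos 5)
Your items (4) and (5) coincide with what the paper does: (4) is the discrete Deift--Killip/Christ--Kiselev/Remling result cited as such, and (5) is classical $\ell^1$ theory. But note that the paper does not prove items (1)--(3) at all; it cites Stolz~\cite{Stolz_SlowlyOsc_1994}, Lukic~\cite{Lukic_CMP_2011}, and Kr\"uger~\cite{Kruger_2011}, its only original contribution being the corrected proof, in Appendix~\ref{app: correction to Kruger}, of the approximation lemma underlying Kr\"uger's item (3). Your attempt to reprove (1)--(3) contains genuine gaps. The most serious is in (1): the inference ``the first-order oscillatory sums converge, hence the transfer matrices remain bounded'' fails throughout $0<\alpha\leq 1/2$, which is most of the claimed range. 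In the Pr\"ufer/EFGP expansion $\log(R(n+1)/R(n))=\tfrac{\nu_k(n)}{2}\sin(2\theta(n))+O(\nu_k(n)^2)$ the quadratic terms are not summable when $\alpha\leq 1/2$; expanding them explicitly, one finds the non-oscillatory drift $\tfrac{1}{8}\nu_k(n)^2>0$ (the source of the Lyapunov exponent in the random case), whose sum diverges, together with terms proportional to $n^{-2\alpha}\cos(2\pi\omega n^{\beta})$ whose partial sums have amplitude of order $N^{1-\beta-2\alpha}$ (for $\beta<1$ the frequency $2\pi\omega\beta n^{\beta-1}$ tends to $0$ mod $2\pi$, so Kuzmin--Landau only gives $\kappa\sim N^{\beta-1}$), hence divergent when $2\alpha+\beta<1$. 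So your first-order argument not only leaves these terms uncontrolled; taken at face value it would predict growth of $R$. The truth (purely a.c.\ spectrum) is rescued by correlations between $\theta$ and the slowly oscillating potential, which is exactly the adiabatic/WKB-type structure Stolz exploits --- a genuinely different mechanism from perturbative Pr\"ufer analysis. The same defect affects your (2) for $\alpha\leq 1/2$: the heuristic correctly predicts the resonance set $\{2\cos(j\omega/2):j\alpha\leq 1\}$, but the claims $\sigma_{sc}=\emptyset$ and the confinement of $\sigma_{pp}$ require eliminating harmonics up to order $\lfloor 1/\alpha\rfloor$, which is Lukic's generalized-bounded-variation machinery and not recoverable from a second-order expansion.

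For (3) you identify the right aliasing mechanism and the correct threshold $\alpha<\tfrac{2-\beta}{2}$, but the sentence ``summing them forces the transfer matrices to grow'' is precisely the unproven core, as you yourself concede: the kicks of size $n_\ell^{(2-\beta)/2-\alpha}$ carry quasi-random stationary-phase phases and could a priori cancel, and your Simon--Wolff step presupposes exactly the uniform-in-energy growth estimates that are missing. Kr\"uger's actual argument, on which the paper relies, proceeds differently: on intervals $I^\pm\subset\Lambda_k^\pm$ of length about $2^{(\alpha+\eps)k}$, centred where the local frequency $\omega\beta m^{\beta-1}$ is close mod $1$ to a target $\omega'$, the potential is approximated in $L^\infty$ by a small-coupling almost Mathieu potential $2\lambda'\cos(2\pi(\phi+\omega' n))$ with error $\lesssim 2^{-(\alpha+4\eps)k}$, and the spectral gaps of the almost Mathieu operator around $\pm 2\cos(\pi\omega')$ are then used to exclude a.c.\ spectrum and to obtain pure point spectrum for a.e.\ $\mu$; the ``but not every $\mu$'' clause follows from rank-one perturbation theory exactly as you say. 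In summary: (4) and (5) are correct and match the paper; (1) as sketched would fail for $\alpha\leq 1/2$; (2) and (3) are plausible plans whose decisive steps are the cited theorems of Lukic and Kr\"uger rather than consequences of your outline.
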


The first statement in Proposition~\ref{prop: known results} is due to Stolz~\cite{Stolz_SlowlyOsc_1994} whose result applies to a much larger class of slowly oscillating potentials. The second part of the proposition, i.e.\ the case $\beta=1$, is due to Lukic~\cite{Lukic_CMP_2011}. Again the results of Lukic extend to a more general form of potentials.  

If $\alpha>1/2$, then as noted earlier Proposition~\ref{prop: known results} (4) follows from (the discrete analogue of) the results in~\cite{ChristKiselev_JAMS_1998,Remling_CMP1998,DeiftKillip_CMP1999}. We note again that this is also the case for potentials with the representation in~\eqref{eq: V representation} and our contribution to the statement of Theorem~\ref{thm: pure ac} is that when $1<\beta<2\alpha$ the spectrum in $(-2, 2)$ is \emph{purely} absolutely continuous.  

The fact that the essential spectrum is purely absolutely continuous when $\alpha >1$, so that $V\in \ell^1$, follows from general results that only depend on the summability of the potential.

Finally, Proposition~\ref{prop: known results} (3) is due to Kr\"uger~\cite{Kruger_2011}. While Kr\"uger only considers the case $\omega=2, \lambda=1$ his proof goes through in the general case with obvious modifications. Since we have not been able to verify the proof of a necessary lemma in~\cite{Kruger_2011}, we include an appropriately modified proof in Appendix~\ref{app: correction to Kruger}. Since the absolutely continuous component of the spectrum is independent of the boundary condition Kr\"uger's result implies that $\sigma_{ac}(H^{\lambda, \mu}_{\omega, \alpha, \beta})=\emptyset$ for all $\mu$. However, the theory of rank-one perturbations implies that for a dense $G_\delta$ set of $\mu$ the singular continuous component of the spectrum is non-empty. In particular, as claimed in the proposition, almost every $\mu$ cannot be improved to every $\mu$. 

In stark contrast to Kr\"uger's result for the discrete model, for the continuum model with $\alpha, \beta$ as in Proposition~\ref{prop: known results} (3) White~\cite{White_TAMS1983} proves that the spectrum is purely absolutely continuous on the spectrum of the Laplacian. 

We note that, for $\alpha=1$ and $\beta=1$, the potential in Proposition 1.2 is a discrete variant of the famous Wigner-von Neumann potential, which provides an example of a Schr\"odinger operator with an embedded eigenvalue. This has led to an enormous literature, and we restrict ourselves to mentioning only \cite{vNeumannWigner_1929,ReedSimon_IV,EasthamKalf,FrankSimon_17}.

The proof of Theorem~\ref{thm: pure ac} is based on Gilbert--Pearson's subordinacy theory. Effectively, this reduces the problem to understanding the asymptotic behavior at infinity of solutions of the eigenvalue equation for our operator. The first step to obtain the desired asymptotics is to rewrite the eigenvalue equation in suitable Pr\"ufer-type coordinates; this is the content of Section~\ref{sec: Prufer}. A key part of the analysis of the Pr\"ufer equations consists in the estimating certain exponential sums; this is the topic of Section~\ref{sec: Expsum bounds}. 

\medskip

{\noindent \bf Acknowledgements} The first author is grateful to J.\ Breuer and H.\ Kr\"uger for discussions on the topic of this paper.

\section{Pr\"ufer coordinates}
\label{sec: Prufer}

Let $\{\psi_n\}_{n\geq 1}$ be the solution of the eigenvalue equation at energy $E =2\cos(k)$ with $0<k<\pi$, that is,
\begin{equation*}
    \begin{cases}
        \psi_{n+1}+\psi_{n-1}+\lambda V_{\omega,\alpha,\beta}(n)\psi_n = 2\cos(k)\psi_n & \mbox{for }n\geq 2\,,\\
       \psi_2 + (\lambda V_{\omega,\alpha,\beta}(1)+\mu)\psi_1 = 2\cos(k)\psi_1\,.
    \end{cases}
\end{equation*}
Setting $\psi_0 = \mu\psi_1$ the second equation can be absorbed into the first by extending it to $n=1$. 

For $n \geq 1$ define the (modified) Pr\"ufer variables $R\in \R_\limplus, \eta\in \R$ by
\begin{align*}
  R(n)\cos(\eta(n)+k(n-1))&=\psi_n-\cos(k)\psi_{n-1}\,,\\
  R(n)\sin(\eta(n)+k(n-1))&=\sin(k)\psi_{n-1}\,.
\end{align*}
The ambiguity in $\eta$ is fixed by demanding $\eta(1), \eta(n+1)-\eta(n) \in [-\pi, \pi)$. For notational convenience we also set $\theta(n)= \eta(n)+kn$.

We note that the boundary condition $\psi_0 = \mu \psi_1$ becomes
\begin{align*}
  R(1) &= |\psi_1|(1+\mu^2-2\mu \cos(k))^{1/2}\,,\\
  \eta(1) &= \arccos\Bigl(\frac{\sgn(\psi_1)(1-\mu \cos(k))}{(1+\mu^2-2\mu \cos(k))^{1/2}}\Bigr)\,.
\end{align*}

Setting $\nu_k(n)= -\frac{V(n)}{\sin(k)}$, the variables $R, \eta$ satisfy
\begin{equation}\label{eq: Discrete Prufer}
\begin{aligned}
  R(n+1)^2 &= R(n)^2\Bigl[1 + \nu_k(n)\sin(2\theta(n))+  \nu_k(n)^2\sin^2(\theta(n))\Bigr] \,,\\
  \cot(\eta(n+1)+kn) &= \cot(\eta(n)+kn)+\nu_k(n)\,,
\end{aligned}
\end{equation}
see for instance~\cite{KiselevLastSimon_EFGP_1998} (note that our $\theta$ corresponds to their $\tilde \theta$).

Under our assumptions $\nu_k(n)$ decays in $n$ and it becomes more practical to deal with approximate equations obtained by appropriate Taylor expansions. Specifically, we shall base our arguments on the asymptotic equations
\begin{equation}\label{eq: approx Prufer eqs}
\begin{aligned}
  \log\Bigl(\frac{R(n+1)}{R(n)}\Bigr) &=
  \frac{\nu_k(n)}{2}\sin(2\theta(n)) + 
  O(|\nu_k(n)|^2)\,,\\
  \eta(n+1) - \eta(n)
  &=
   -\frac{\nu_k(n)}{2}+\frac{\nu_k(n)}{2}\cos(2\theta(n))+O(|\nu_k(n)|^2)\,.
\end{aligned}
\end{equation}
This form of the equations is well suited to study how the solutions behave on scales larger then $1$.
Naturally, one could have made Taylor expansions to higher order but for our purposes the $|\nu_k(n)|^2$ error term will be sufficient. Under our assumptions this error is summable and the approximate equation for $R$ determines its asymptotic behavior up to a bounded multiplicative factor, which is sufficient for the application of Gilbert--Pearson subordinacy theory~\cite{GilbertPearson_subordinacy_1987}. While it is rather clear that the above equations provide sufficient information concerning $R$, it need not be the case that this precision suffices when it comes to $\eta$. Indeed, from the manner in which $\eta$ enters the equation for $R$ even a bounded additive perturbation might without further information have a very large effect. As we shall see, this is not the case for us. Indeed, we will argue below that the oscillations of $\sin(2\theta(n))$ can be treated as insignificant in relation to those induced by $\nu_k(n)$. However, if one were to attempt to extend our techniques to the region where $\alpha_j >1/2$ and $\beta_j \geq 2\alpha_j$, then one might need to study the equation for $\eta$ to higher precision.

By iterating~\eqref{eq: approx Prufer eqs} and inserting the assumed form of $V$, one finds that for integers $1\leq N_1< N_2$
\begin{equation}\label{eq: R eq iterated}
  \log\Bigl(\frac{R(N_2+1)}{R(N_1)}\Bigr) 
  =
  -\sum_{j=1}^m\sum_{n=N_1}^{N_2} \!\!\frac{\lambda_j\cos(\phi_j(n))\sin(2\theta(n))}{2\sin(k)n^{\alpha_j}} + O\Biggl(\sum_{n=N_1}^{N_2}\!\Bigl[\frac{1}{n^{2\alpha_1}} + |V_0(n)|\Bigr]\!\Biggr)\,,
\end{equation}
and
\begin{equation}\label{eq: eta eq iterated}
  \eta(N_2+1) - \eta(N_1)
  =
   \sum_{j=1}^m\sum_{n=N_1}^{N_2}\!\! \frac{\lambda_j\cos(\phi_j(n))}{2\sin(k)n^{\alpha_j}}\Bigl[1-\cos(2\theta(n))\Bigr] +  O\Biggl(\sum_{n=N_1}^{N_2}\!\Bigl[\frac{1}{n^{2\alpha_1}} + |V_0(n)|\Bigr]\!\Biggr)\,.
\end{equation}

Our goal is to show that these equations imply that the Pr\"ufer radius $R(n)$ has a finite and non-zero limit as $n \to \infty$. By Gilbert--Pearson subordinacy theory, it follows that the spectrum of $H_{V}^{\mu}$ in $(-2, 2)$ is purely absolutely continuous. Given the form of the equations above it should not come as a surprise that the key to our analysis is to understand certain exponential sums. The necessary exponential sum bounds are the topic of the next section. 

Under our assumptions we shall see that the only information that we need concerning $\eta$ is the rough continuity estimate
\begin{equation}\label{eq: eta continuity bound}
  |\eta(n+1)-\eta(n)|\lesssim n^{-\alpha_1} + |V_0(n)|\,.
\end{equation}
If one were to attempt to use the same ideas as employed here under weaker assumptions on the pairs $(\alpha_j, \beta_j)$ one might very well need to consider the equations for $R$ and $\eta$ to higher precision. For a problem closely resembling the case $\alpha=1/2, \beta=3/2$ such analysis was recently carried out by the authors in~\cite{FrankLarson_KP_2021}. It is likely that the methods used in that paper could successfully be applied also to the current setting to treat a larger region of $(\alpha, \beta)$. However, without further insights this analysis would likely require certain number theoretic assumptions on the parameter $\omega$ which we believe are artificial when $\alpha>1/2$. However, such number theoretic assumptions might very well be relevant when $\alpha\leq 1/2$.

\section{Exponential sum bounds}
\label{sec: Expsum bounds}

As we observed in the previous section our analysis will rely on understanding exponential sums with power decaying amplitudes and phases given by combinations of $\phi_j$ and~$\theta$. The goal of this section is to prove that we can understand these sums to fairly high accuracy by treating the dependence on the Pr\"ufer angle $\eta$ perturbatively.

More specifically we shall consider sums of the form
\begin{equation*}
  \sum_{a<n\leq b} \frac{e^{i(\phi(n) + h(n))}}{n^\rho}
\end{equation*}
where $\phi \approx \pi \omega n^\beta$ for some $\omega, \beta$ (in the sense of~\eqref{eq: phase assumption}) and $h$ satisfies a regularity estimate in terms of the differences
\begin{equation}\label{eq: h differences}
  \delta h(n) =|h(n+1)-h(n)|\,.
\end{equation}

As we shall see there are two natural scales appearing for these exponential sums. One is that determined by the known oscillations $\phi(n)= \pi \omega n^{\beta}(1+o(1))$ corresponding to considering $n$ comparable to $l^{1/(\beta-1)}$ for some integer $l$. This corresponds to the distribution of points $x\in \R$ where $\phi'(x)  \in 2\pi \Z$ (where the local oscillation rate lines up with the integer lattice). The second scale comes from the regularity of the perturbation $h$. In our case the perturbation will essentially satisfy an estimate of the form $\delta h(n) \lesssim n^{-\gamma}$ for some $\gamma >0$ (cf.~\eqref{eq: eta continuity bound}). If $2-\beta <\gamma$, then $h$ can essentially be treated as a constant on the scale determined by the oscillations. If $2-\beta>\gamma$ then this might no longer be the case, which will lead to some complications that need to be dealt with.

The main theorem to be proven in this section is the following.
\begin{theorem}\label{thm: main exp sum bound}
 Fix $1<\beta <2,$ $\rho \geq 0$, $\gamma \in [0, 1]$, and $\eps>0$. Let $\phi\in C^2(\R_\limplus)$ be a real-valued function satisfying
  \begin{equation*}
    |\partial^k(\phi(x)-\pi \omega x^\beta)| \lesssim o_{x\to \infty}(|x|^{\beta-k}) \mbox{ for } k=0, 1, 2 \mbox{ and some } \omega\neq 0\,,
  \end{equation*}
  and $h\colon \N \to \R$ satisfy
  \begin{equation*}
    |h(n+1)-h(n)| \lesssim n^{-\gamma} + v_n \quad \mbox{with } v=\{v_n\}_n \in \ell^1(\N)\,.
  \end{equation*}
  For $l\in \N$ large let $Y_l$ be the unique solution of $\phi'(Y_l) = \sgn(\omega)2\pi l$. Then
  \begin{equation*}
    \biggl|\sum_{Y_l<n\leq Y_{l+1}} \frac{e^{i(\phi(n)+h(n))}}{n^\rho}\biggr| \lesssim \Bigl(1+\|v\|_{\ell^1} + l^{\frac{2-\beta-2\gamma}{2(\beta-1)}+\eps}\Bigr)l^{\frac{2-\beta-2\rho}{2(\beta-1)}}\,.
  \end{equation*}
\end{theorem}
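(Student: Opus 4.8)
The plan is to reduce the claim, via two summations by parts, to a bound on the pure phase sum $\sum e^{i\phi(n)}$ over short blocks, and then to estimate those blocks by combining the first- and second-derivative tests for exponential sums. Throughout I assume $\omega>0$ (the case $\omega<0$ follows by complex conjugation) and first record the scales forced by the hypotheses on $\phi$. From $\phi'(x)=\pi\omega\beta x^{\beta-1}(1+o(1))$ and $\phi''(x)=\pi\omega\beta(\beta-1)x^{\beta-2}(1+o(1))$ it follows that $\phi'$ is eventually monotone, that $Y_l\asymp l^{1/(\beta-1)}$, that $Y_{l+1}-Y_l\asymp \phi''(Y_l)^{-1}\asymp Y_l^{2-\beta}$ (by the mean value theorem, since $\phi'(Y_{l+1})-\phi'(Y_l)=2\pi$), and that $|\phi''|\asymp Y_l^{\beta-2}$ on $(Y_l,Y_{l+1}]$. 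In particular $n^{-\rho}\asymp Y_l^{-\rho}$ is essentially constant on the interval.

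First I would remove the amplitude by Abel summation. Since $n\mapsto n^{-\rho}$ is monotone with total variation $\le Y_l^{-\rho}$ on $(Y_l,Y_{l+1}]$, the sum in question is $\lesssim Y_l^{-\rho}\sup_{M}\bigl|\sum_{Y_l<n\le M} e^{i(\phi(n)+h(n))}\bigr|$. As $Y_l^{-\rho}\asymp l^{(2-\beta-2\rho)/(2(\beta-1))}\,l^{-(2-\beta)/(2(\beta-1))}$, it then suffices to show that these partial sums of $e^{i(\phi+h)}$ are $\lesssim \bigl(1+\|v\|_{\ell^1}+l^{(2-\beta-2\gamma)/(2(\beta-1))+\eps}\bigr)\,Y_l^{(2-\beta)/2}$, i.e.\ (converting to powers of $Y_l$) $\lesssim (1+\|v\|_{\ell^1})\,Y_l^{(2-\beta)/2}+Y_l^{(2-\beta)-\gamma+\eps}$.

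To handle $h$, I would partition $(Y_l,Y_{l+1}]$ into consecutive blocks of length $\asymp Y_l^{\gamma}$ (a single block if $\gamma\ge 2-\beta$), so that on each block $I$ the total variation of $h$ is $\lesssim 1+\sum_{n\in I}v_n$; this uses $|h(n+1)-h(n)|\lesssim n^{-\gamma}+v_n$ together with $n\asymp Y_l$. On each $I$ a further summation by parts pulls $e^{ih}$ out at the cost of $1+\mathrm{TV}_I(h)$, reducing matters to $\sup_M\bigl|\sum_{n\in I,\,n\le M} e^{i\phi(n)}\bigr|$. The key geometric fact is that on $(Y_l,Y_{l+1}]$ one has $\phi'(x)/(2\pi)\in(l,l+1)$, so the distance $\|\phi'/(2\pi)\|$ to the nearest integer vanishes only at the two endpoints and grows linearly, at rate $\asymp Y_l^{\beta-2}$, into the interval. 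I would therefore bound each block sum by the minimum of three estimates: the trivial bound $Y_l^{\gamma}$; the van der Corput second-derivative bound $\asymp|\phi''|^{-1/2}\asymp Y_l^{(2-\beta)/2}$ (needed for the one or two blocks abutting the endpoints, where $\|\phi'/(2\pi)\|\to 0$ and Kuzmin–Landau degenerates); and the Kuzmin–Landau first-derivative bound $\lesssim 1/\|\phi'/(2\pi)\|$ for the remaining blocks. Indexing a block by its distance $\tilde\jmath$ from the nearer endpoint gives $\|\phi'/(2\pi)\|\asymp \tilde\jmath\,Y_l^{\beta-2+\gamma}$, so the Kuzmin–Landau contributions form a harmonic sum $\sum_{\tilde\jmath}Y_l^{2-\beta-\gamma}/\tilde\jmath$.

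Summing the per-block bounds then yields the claim: the endpoint-adjacent blocks contribute $\lesssim Y_l^{(2-\beta)/2}$ via van der Corput; the blocks with $\tilde\jmath\lesssim Y_l^{2-\beta-2\gamma}$ (present only when $\gamma<(2-\beta)/2$) contribute $\lesssim Y_l^{2-\beta-2\gamma}\cdot Y_l^{\gamma}=Y_l^{2-\beta-\gamma}$ through the trivial bound; and the harmonic sum over the rest contributes $\lesssim Y_l^{2-\beta-\gamma}\log l$. The total $\lesssim Y_l^{(2-\beta)/2}+Y_l^{(2-\beta)-\gamma}\log l$ matches the target once the logarithm is absorbed into $l^{\eps}$, while the $\ell^1$ part $v$ enters through $\sum_I\mathrm{TV}_I(h)$ and contributes the $\|v\|_{\ell^1}\,Y_l^{(2-\beta)/2}$ term after bounding each block sum by its van der Corput value. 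The step I expect to be the main obstacle is exactly this summation over blocks when $\gamma<2-\beta$: applying the second-derivative test blindly on every block would let the $|\phi''|^{-1/2}$ boundary term accumulate to $Y_l^{2-\beta-\gamma}\cdot Y_l^{(2-\beta)/2}$, overshooting the target by a full factor $Y_l^{(2-\beta)/2}$. Escaping this loss by using the first-derivative bound away from the two stationary endpoints, so that only a harmonic series rather than the number of blocks multiplies the boundary scale, is the crux of the argument.
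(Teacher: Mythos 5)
Your argument is correct and rests on the same mechanism as the paper's proof: Abel summation to strip off the amplitude $n^{-\rho}$ and the perturbation $e^{ih}$, Kuzmin--Landau away from the resonant points $Y_l,Y_{l+1}$, and a crude bound on a neighbourhood of size $\asymp|\phi''|^{-1/2}\asymp l^{\frac{2-\beta}{2(\beta-1)}}$ of those points. The differences are in the decomposition and in one tool. The paper cuts $(Y_l,Y_{l+1}]$ into a \emph{finite} number of geometrically spaced shells $(Y_l+l^{\sigma_k},Y_l+l^{\sigma_{k+1}}]$, applies the trivial bound on the innermost one and Lemma~\ref{lem: decay exp sum1} (Kuzmin--Landau with the $1+\|\delta h\|_{\ell^1}$ loss from partial summation) on the rest; the $l^{\eps}$ in the statement is precisely the price of the ratio $l^{\sigma_{k+1}-\sigma_k}$ between consecutive scales. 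You instead use uniform blocks of length $\asymp Y_l^{\gamma}$, calibrated so that the deterministic part of the variation of $h$ is $O(1)$ per block, and recover a sharper logarithmic loss through the harmonic sum $\sum_{\tilde\jmath}\tilde\jmath^{-1}$; in exchange you need the second-derivative (van der Corput) test on the endpoint blocks, which the paper avoids by bounding the innermost shell trivially. Your diagnosis of the danger of applying the second-derivative test on every block --- the $|\phi''|^{-1/2}$ boundary terms would accumulate --- is exactly the right point, and both your harmonic-sum fix and the paper's choice of $K=O(1/\eps)$ shells circumvent it; that your version yields $\log l$ where the paper yields $l^{\eps}$ is immaterial for the application.
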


\begin{remark} 
A few remarks:
\begin{enumerate}
  \item We emphasize that the regions close to the points $Y_l$ in Theorem~\ref{thm: main exp sum bound} have a special significance for the size of the exponential sum and as such the appearance of these points in the bounds is not coincidental.

  \item By application of the Poisson summation formula and a stationary phase calculation it is not very difficult to verify that the contribution to the sum from an $l^{\frac{2-\beta}{2(\beta-1)}}$ neighbourhood of $Y_l$ is comparable to $l^{\frac{2-\beta-2\rho}{2(\beta-1)}}$. In particular, the order of the first term in the bound is sharp. If the dependence of the bound on $\gamma$ is best possible is less clear. 

  \item Noting that $Y_l = \bigl(\frac{2l}{\beta|\omega|}\bigr)^{\frac{1}{\beta-1}}(1+o(1))$ and $Y_{l+1}-Y_l = \frac{1}{\beta-1}\bigl(\frac{2}{\beta|\omega|}\bigr)^{\frac{1}{\beta-1}}l^{\frac{2-\beta}{\beta-1}}(1+o(1))$ the sum is trivially $O(l^{\frac{2-\beta-\rho}{\beta-1}})$. Similarly, a square-root cancellation heuristic (treating the phases $e^{i(\phi(x)+h(n))}$ as i.i.d.\ random variables with mean zero) suggests that we cannot expect the sum to be much smaller than $O(l^{\frac{2-\beta-2\rho}{2(\beta-1)}})$. This again shows the optimality of the first term in the bound.
\end{enumerate}
  
\end{remark}

The above theorem will be obtained through fairly standard arguments; away form the $Y_l$ we shall employ the classical bound of Kuzmin--Landau, close to the $Y_l$ we bound the terms trivially; when combined with a simple optimization over choices of scales this leads to the above theorem. Essentially, we are redoing the standard argument to obtain the classical bound of van der Corput (see for instance~\cite[Theorem~5.9]{Titchmarsh_RiemanZeta86}) but allowing for the additional amplitude $n^{-\rho}$ and the additional perturbation of the phase by $h$.

While our main aim in this section is to prove Theorem~\ref{thm: main exp sum bound} it is convenient to start by proving much simpler bounds. Specifically, we shall work our way up from direct consequences of the classical bound of Kuzmin--Landau by step-by-step introducing the complications of the decaying amplitude and the unknown phase.

\begin{lemma}\label{lem: no decay exp estimate}
  Fix $1<\beta <2$. Let $\phi, Y_l$ be as in Theorem~\ref{thm: main exp sum bound}. Then, for $Y_l \leq a<b \leq Y_{l+1}$,
  \begin{equation*}
    \biggl|\sum_{a <n \leq b} e^{i\phi(n)}\biggr| \lesssim 
    l^{\frac{2-\beta}{\beta-1}}\dist(\{Y_l, Y_{l+1}\}, [a, b])^{-1}\,.
  \end{equation*}
\end{lemma}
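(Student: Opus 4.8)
The plan is to apply the Kuzmin--Landau lemma, which states that for a real sequence with monotone differences $\psi(n+1)-\psi(n)$ staying at distance at least $\delta$ from $2\pi\Z$ (with $0<\delta\le\pi$), one has $|\sum_{a<n\le b}e^{i\psi(n)}|\lesssim \delta^{-1}$. Here I would take $\psi=\phi$ and estimate the distance of $\phi'(n)$ from the nearest multiple of $2\pi$ on the interval $[a,b]\subseteq[Y_l,Y_{l+1}]$. By the defining property $\phi'(Y_l)=\sgn(\omega)2\pi l$ and $\phi'(Y_{l+1})=\sgn(\omega)2\pi(l+1)$, on the whole interval $(Y_l,Y_{l+1})$ the value $\phi'(x)$ lies strictly between two consecutive multiples of $2\pi$, so the only relevant multiples to avoid are the two endpoints $\sgn(\omega)2\pi l$ and $\sgn(\omega)2\pi(l+1)$.

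First I would use the second-derivative information. The hypothesis $|\partial^2(\phi(x)-\pi\omega x^\beta)|=o(x^{\beta-2})$ together with $\partial^2(\pi\omega x^\beta)=\pi\omega\beta(\beta-1)x^{\beta-2}$ gives $\phi''(x)\asymp x^{\beta-2}$ for large $x$, of a fixed sign. Since $Y_l\asymp l^{1/(\beta-1)}$, on the interval $[Y_l,Y_{l+1}]$ we have $|\phi''(x)|\asymp Y_l^{\beta-2}\asymp l^{(\beta-2)/(\beta-1)}$. Because $\phi'$ is monotone with this controlled derivative, for $n\in[a,b]$ the distance of $\phi'(n)$ from the endpoint value $\sgn(\omega)2\pi l$ is at least comparable to $|\phi''|\cdot(a-Y_l)\gtrsim l^{(\beta-2)/(\beta-1)}\,(a-Y_l)$, and similarly the distance from $\sgn(\omega)2\pi(l+1)$ is at least comparable to $l^{(\beta-2)/(\beta-1)}\,(Y_{l+1}-b)$. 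Dividing $[a,b]$ at its midpoint (or at the point where $\phi'$ crosses the midpoint value $(2l+1)\pi\,\sgn(\omega)$) and applying Kuzmin--Landau to each half with the appropriate lower bound $\delta\gtrsim l^{(\beta-2)/(\beta-1)}\dist(\{Y_l,Y_{l+1}\},[a,b])$ then yields
\[
\biggl|\sum_{a<n\le b}e^{i\phi(n)}\biggr|\lesssim \delta^{-1}\lesssim l^{\frac{2-\beta}{\beta-1}}\dist(\{Y_l,Y_{l+1}\},[a,b])^{-1},
\]
which is exactly the claimed bound.

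I expect the main obstacle to be purely technical: verifying that the hypotheses of Kuzmin--Landau are genuinely met, i.e.\ that $\phi'(n)-\sgn(\omega)2\pi l$ (modulo $2\pi$) stays in $(\delta,2\pi-\delta)$ throughout $[a,b]$ rather than wrapping around, and that the monotonicity of the differences holds exactly for the integer sequence and not merely for the continuous function. The splitting at the midpoint handles the two-sided proximity to the two endpoints $Y_l,Y_{l+1}$ so that on each half the relevant quantity $\dist(\phi'(n),2\pi\Z)$ is controlled from below by a single distance. One must also pass from the derivative lower bound to a bound on $\dist(\{Y_l,Y_{l+1}\},[a,b])$; since $Y_{l+1}-Y_l\asymp l^{(2-\beta)/(\beta-1)}$, the two factors combine correctly and nothing is lost in the exponent. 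Finally, I would note that the restriction $1<\beta<2$ guarantees $\phi''$ has constant sign and that $Y_{l+1}-Y_l\to\infty$, so the spacing-based estimates are meaningful for large $l$.
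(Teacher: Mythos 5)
Your proposal is correct and follows essentially the same route as the paper's proof: the Kuzmin--Landau bound combined with the estimate $\phi''\asymp l^{(\beta-2)/(\beta-1)}$ on $[Y_l,Y_{l+1}]$ to bound the distance of $\phi'(n)$ from the two relevant multiples $2\pi l\sgn(\omega)$ and $2\pi(l+1)\sgn(\omega)$ from below by $l^{(\beta-2)/(\beta-1)}\dist(\{Y_l,Y_{l+1}\},[a,b])$. The only cosmetic difference is your midpoint splitting, which is harmless but unnecessary, since the lower bound holds simultaneously for both multiples uniformly on $[a,b]$ and a single application of Kuzmin--Landau suffices.
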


\begin{proof}
    Recall the classical result of Kuzmin--Landau~\cite[Lemma~4.19]{Titchmarsh_RiemanZeta86}: If $\min_{\nu\in \Z}|f'(x)-\nu|\geq \kappa >0$ for all $x \in [a, b]$ then
    \begin{equation}\label{eq: Kuzmin-Landau}
      \biggl|\sum_{a<n\leq b}e^{2\pi i f(n)}\biggr| \lesssim \kappa^{-1}\,.
    \end{equation}
    By asymptotic monotonicity of $\phi'$,
    $$
    \min_{\nu \in \Z} \Bigl|\frac{\phi'(n)}{2\pi}-\nu\Bigr| = \min\Bigl\{\Bigl|\frac{\phi'(n)}{2\pi \sgn(\omega)}-l\Bigr|, \Bigl|\frac{\phi'(n)}{2\pi\sgn(\omega)}-l-1\Bigr|\Bigr\}\quad \mbox{for all } x \in [Y_l, Y_{l+1}]\,.
    $$
    For $n\geq Y_l + cl^\sigma$, Taylor expanding around $Y_l$
    \begin{equation*}
      \phi'(n) = \phi'(Y_l) + \phi''(z)(n-Y_l)= \sgn(\omega)2\pi l + \phi''(z)(n-Y_l) \quad \mbox{for some }z \in [Y_l, n]\,.
    \end{equation*}
    Since $n-Y_l \geq \dist(\{Y_l, Y_{l+1}\}, [a, b])$ and $\phi''(z) \gtrsim l^{\frac{\beta -2}{\beta-1}}$ we deduce that for $n$ in the range of the sum 
    \begin{equation*}
      \Bigl|\frac{\phi'(n)}{2\pi\sgn(\omega)}-l\Bigr| \gtrsim l^{\frac{\beta-2}{\beta-1}}\dist(\{Y_l, Y_{l+1}\}, [a, b])\,.
    \end{equation*}
    The analogous argument proves that also
    \begin{equation*}
      \Bigl|\frac{\phi'(n)}{2\pi\sgn(\omega)}-l-1\Bigr| \gtrsim l^{\frac{\beta-2}{\beta-1}}\dist(\{Y_l, Y_{l+1}\}, [a, b])\,.
    \end{equation*}
    The bound claimed in the lemma follows from~\eqref{eq: Kuzmin-Landau}.
\end{proof}

Let's add decay and perturb the phase.
\begin{lemma}\label{lem: decay exp sum1}
  Fix $1<\beta <2$ and $\rho \geq 0$. Let $\phi, Y_l$ be as in Theorem~\ref{thm: main exp sum bound} and define $\delta h$ by~\eqref{eq: h differences}. If $Y_l \leq a<b \leq Y_{l+1}$, then
  \begin{equation*}
    \biggl|\sum_{a<n\leq b} \frac{e^{i(\phi(n)+h(n))}}{n^\rho}\biggr| 
    \lesssim \Bigl(1+\|\delta h\|_{\ell^1(a, b)}\Bigr)
    l^{\frac{2-\beta-\rho}{\beta-1}}\dist(\{Y_l, Y_{l+1}\}, [a, b])^{-1}\,.
  \end{equation*}
\end{lemma}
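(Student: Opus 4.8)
The plan is to reduce this statement to the previous lemma (Lemma~\ref{lem: no decay exp estimate}) by peeling off the decaying amplitude $n^{-\rho}$ and the phase perturbation $h$ via summation by parts (Abel summation). First I would introduce the partial sums $S(N) = \sum_{a < n \leq N} e^{i\phi(n)}$, for which Lemma~\ref{lem: no decay exp estimate} gives the uniform bound
\begin{equation*}
  |S(N)| \lesssim l^{\frac{2-\beta}{\beta-1}}\dist(\{Y_l, Y_{l+1}\}, [a, b])^{-1} =: M
\end{equation*}
for every $N \in (a, b]$, since $[a, N] \subseteq [a, b]$ only moves the endpoint inward and hence does not decrease the distance to $\{Y_l, Y_{l+1}\}$. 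The point is that the unperturbed exponential sum has controlled partial sums, and I want to transfer that control to the weighted sum with amplitude $a(n) = n^{-\rho}e^{ih(n)}$.

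The key step is Abel summation applied to $\sum_{a<n\leq b} a(n)\, (S(n)-S(n-1))$, which rewrites the sum as a boundary term plus $\sum S(n)\,(a(n)-a(n+1))$. The boundary term is bounded by $M \cdot b^{-\rho} \lesssim M\, Y_l^{-\rho} \lesssim M\, l^{-\rho/(\beta-1)}$, using that $b \asymp Y_l \asymp l^{1/(\beta-1)}$ throughout the block. For the telescoping sum I need to control $\sum_n |a(n) - a(n+1)|$. Writing $a(n)-a(n+1) = n^{-\rho}e^{ih(n)} - (n+1)^{-\rho}e^{ih(n+1)}$ and splitting into the amplitude difference and the phase difference, I would estimate
\begin{equation*}
  |a(n)-a(n+1)| \lesssim |n^{-\rho}-(n+1)^{-\rho}| + n^{-\rho}|e^{ih(n)}-e^{ih(n+1)}| \lesssim n^{-\rho-1} + n^{-\rho}\,\delta h(n)\,.
\end{equation*}
Summing the first term over the block gives $\sum_n n^{-\rho-1}$, which is comparable to $Y_l^{-\rho} \asymp l^{-\rho/(\beta-1)}$ (the block has length $\asymp l^{(2-\beta)/(\beta-1)}$ and the summand is roughly constant $\asymp Y_l^{-\rho-1}$, so this is a genuine Riemann-sum estimate giving the same order). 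Summing the second term and using $n^{-\rho} \asymp Y_l^{-\rho} \asymp l^{-\rho/(\beta-1)}$ over the block produces $l^{-\rho/(\beta-1)}\|\delta h\|_{\ell^1(a,b)}$.

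Collecting the pieces, each contribution carries the factor $M \cdot l^{-\rho/(\beta-1)} = l^{\frac{2-\beta-\rho}{\beta-1}}\dist(\{Y_l, Y_{l+1}\}, [a, b])^{-1}$, and the phase-variation term supplies the extra factor $\|\delta h\|_{\ell^1(a,b)}$, so the total is bounded by $(1+\|\delta h\|_{\ell^1(a,b)})\, l^{\frac{2-\beta-\rho}{\beta-1}}\dist(\{Y_l, Y_{l+1}\}, [a,b])^{-1}$, as claimed. The main technical point to be careful about is ensuring that all the powers of $n$ appearing in the block can uniformly be replaced by the corresponding power of $Y_l \asymp l^{1/(\beta-1)}$; this is justified because $a, b \in [Y_l, Y_{l+1}]$ and $Y_{l+1}/Y_l = 1 + o(1)$, so $n \asymp Y_l$ with constants independent of $l$ throughout the range of summation. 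I expect this bookkeeping, rather than any genuine difficulty, to be the only place demanding attention, since the cancellation has already been done in Lemma~\ref{lem: no decay exp estimate} and Abel summation merely transports it through the slowly varying weight.
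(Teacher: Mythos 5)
Your proposal is correct and follows essentially the same route as the paper: Abel summation against the partial sums of $e^{i\phi(n)}$ controlled by Lemma~\ref{lem: no decay exp estimate}, the bound $|a(n)-a(n+1)|\lesssim n^{-\rho-1}+n^{-\rho}\delta h(n)$, and the observation that $n\asymp Y_l$ throughout the block. (The only quibble is that $\sum_{a<n\leq b} n^{-\rho-1}$ is in fact $O(l^{-1}Y_l^{-\rho})$ rather than comparable to $Y_l^{-\rho}$, but since you only need the upper bound this is harmless.)
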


\begin{proof}
   Without loss of generality assume that $a, b \in \N$. Summing by parts
   \begin{equation}\label{eq: sum by parts}
   \begin{aligned}
      \sum_{a <n \leq b} \frac{e^{i(\phi(n)+h(n))}}{n^\rho} 
      &=
      \sum_{a <n \leq b-1}\biggl[\frac{e^{ih(n)}}{n^\rho}-\frac{e^{ih(n+1)}}{(n+1)^\rho}\biggr] \sum_{a<j\leq n}e^{i\phi(j)}\\
      &\quad 
      + \frac{e^{i h(b)}}{b^\rho}\sum_{a <n \leq b} e^{i\phi(n)}\,.
    \end{aligned}
    \end{equation} 
    Using the assumption on $h$,
    \begin{align*}
      \biggl|\frac{e^{ih(n)}}{n^\rho}-\frac{e^{ih(n+1)}}{(n+1)^\rho}\biggr|
      &\leq
      \biggl|\frac{1}{n^\rho}-\frac{1}{(n+1)^\rho}\biggr|+ \frac{|h(n)-h(n+1)|}{n^\rho}\\
      &\lesssim n^{-\rho-1} + n^{-\rho}\delta h(n)\,.
    \end{align*}
    Inserting this bound into~\eqref{eq: sum by parts} we arrive at
    \begin{align*}
      \biggl|\sum_{a <n \leq b} \frac{e^{i(\mu p(n)+h(n))}}{n^\rho} \biggr|
      &\lesssim
      \sum_{a <n \leq b-1}n^{-\rho}(n^{-1}+\delta h(n))\biggl|\sum_{a<j\leq n}e^{i\phi(j)}\biggr|\\
      &\quad 
      + b^{-\rho}\biggl|\sum_{a <n \leq b} e^{i\phi(n)}\biggr|\,.
    \end{align*}
     By Lemma~\ref{lem: no decay exp estimate}, H\"older's inequality and the fact that $Y_l \gtrsim l^{\frac{1}{\beta-1}}$,
     \begin{align*}
       \biggl|\sum_{a <n \leq b} \frac{e^{i(\phi(n)+h(n))}}{n^\rho} \biggr|
      &\lesssim
      \biggl(b^{-\rho} +  \sum_{a<n\leq b-1}n^{-\rho}(n^{-1}+\delta h(n))\biggr)
      l^{\frac{2-\beta}{\beta-1}}\dist(\{Y_l, Y_{l+1}\}, [a, b])^{-1}\\
      &\lesssim
      \Bigl(1+\|\delta h\|_{\ell^1(a, b)}\Bigr)
      l^{\frac{2-\beta-\rho}{\beta-1}}\dist(\{Y_l, Y_{l+1}\}, [a, b])^{-1}\,.
     \end{align*}
     Here we used $\sum_{a<n<b}n^{-1} \lesssim \log(b/a) \leq \log(Y_{l+1}/Y_l) \lesssim 1$.
    This completes the proof of the lemma.
\end{proof}

With the above bound in hand we are ready to prove Theorem~\ref{thm: main exp sum bound}.

\begin{proof}[Proof of Theorem~\ref{thm: main exp sum bound}]
  The idea of the proof is to split the summation range $(Y_l, Y_{l+1}]$ into appropriate subintervals and apply either the trivial bound or the bound in Lemma~\ref{lem: decay exp sum1} to the individual pieces. The choice of subintervals will be chosen depending on the distance to the resonant points $Y_l, Y_{l+1}$. To simplify notation we only write out the argument for the sum over the first half of the interval. The sum over the second half can be treated analogously.

  Recall that $Y_{l+1}-Y_{l}$ is proportional to $l^\frac{2-\beta}{\beta-1}$. Our goal is to construct a finite increasing sequence $\{\sigma_k\}_{k=1}^K$ with $0<\sigma_j<\frac{2-\beta}{\beta-1}$ and split our sum over the intervals $I_0 = (Y_l, Y_l+l^{\sigma_k}]$, $I_K = (Y_l+l^{\sigma_K}, \frac{Y_{l+1}-Y_l}{2}]$, and $I_k=(Y_l+l^{\sigma_{k}}, Y_l + l^{\sigma_{k+1}}]$ with $k=1, \ldots, K-1$. That is, we write
  \begin{align*}
    \sum_{Y_l < n \leq \frac{Y_{l+1}-Y_l}{2}} \frac{e^{i(\phi(n)+h(n))}}{n^\rho} &= \sum_{k=0}^{K}\sum_{n \in I_k} \frac{e^{i(\phi(n)+h(n))}}{n^\rho}\,.
  \end{align*}
  For $k=1, \ldots, K-1$, it holds that $|I_k| \lesssim l^{\sigma_{k+1}}$ and $\dist(I_k, Y_l) \gtrsim l^{\sigma_k}$. Similarly, $|I_0|\lesssim l^{\sigma_1}, |I_K| \lesssim l^{\frac{2-\beta}{\beta-1}},$ and $\dist(I_K, Y_l) \gtrsim l^{\sigma_K}$.

  Using the trivial bound for the sum over $I_0$ and Lemma~\ref{lem: decay exp sum1} for the sum over the $I_k$, $k>0$, we have the bound
  \begin{align*}
    \biggl|\sum_{Y_l < n \leq \frac{Y_{l+1}-Y_l}{2}} \frac{e^{i(\phi(n)+h(n))}}{n^\rho}\biggr| 
    &\lesssim 
    \sum_{n\in I_0}\frac{1}{n^\rho}+
    \sum_{k=1}^{K}\Bigl(1+\|\delta h\|_{\ell^1(I_k)}\Bigr)l^{\frac{2-\beta-\rho}{\beta-1}-\sigma_{k}}\,.
  \end{align*}
  Since $\delta h(n) \lesssim n^{-\gamma}+v_n$, $K$ is finite, and $Y_l\gtrsim l^{\frac{1}{\beta-1}}$, it follows that
  \begin{align*}
    \biggl|\sum_{Y_l < n \leq \frac{Y_{l+1}-Y_l}{2}} \frac{e^{i(\phi(n)+h(n))}}{n^\rho}\biggr| 
    &\lesssim l^{\sigma_1 - \frac{\rho}{\beta-1}}
    +
    (1+\|v\|_{l^1})l^{\frac{2-\beta-\rho}{\beta-1}-\sigma_1}\\
    &\quad 
    +l^{\frac{2-\beta-\rho-\gamma}{\beta-1}}\sum_{k=1}^{K-1}l^{\sigma_{k+1}-\sigma_{k}} + l^{\frac{2-\beta-\rho-\gamma}{\beta-1}+ \frac{2-\beta}{\beta-1}-\sigma_K}\,.
  \end{align*}
  Choosing $\sigma_1 = \frac{2-\beta}{2(\beta-1)}$ and setting $\sigma_{k+1}-\sigma_k = \frac{1}{K}(\frac{2-\beta}{\beta-1}-\sigma_1) = \frac{2-\beta}{2K(\beta-1)}$ with the smallest positive integer so that $\frac{2-\beta}{2K(\beta-1)}\leq \eps$, the left-hand side above is bounded by 
  \begin{align*}
    (1+\|v\|_{l^1})l^{\frac{2-\beta-2\rho}{2(\beta-1)}}
    +l^{\frac{2-\beta-\rho-\gamma}{\beta-1} + \eps}
    =
    \Bigl(1+\|v\|_{l^1} + l^{\frac{2-\beta-2\gamma}{2(\beta-1)}+\eps}\Bigr)l^{\frac{2-\beta-2\rho}{2(\beta-1)}}\,.
  \end{align*}
  This completes the proof of Theorem~\ref{thm: main exp sum bound}.
\end{proof}

\section{Proof of Theorem~\ref{thm: pure ac}}

Our proof of Theorem~\ref{thm: pure ac} is based on the subordinacy theory of Gilbert--Pearson~\cite{GilbertPearson_subordinacy_1987}. Specifically, that spectrum of $\Delta+V$ in $(-2, 2)$ is purely absolutely continuous will follow once we show that for every $k\in (0, \pi)$ and $\mu \in \R$ the Pr\"ufer radius $R$ associated to the solution of the generalized eigenvalue equation has a finite and non-zero limit as $n$ tends to infinity.

Fix $k\in (0, \pi), \mu \in \R$ and let $R$ be the Pr\"ufer radius associated to the corresponding solution of the generalized eigenvalue equation. By scaling we may without loss of generality assume that $R(1)=1$. To prove that $R$ has a non-zero and finite limit as $n \to \infty$ we aim to show that
\begin{equation*}
  \lim_{n\to \infty} \log(R(n))
\end{equation*}
exists and is finite.

We argue that $\{\log(R(n))\}_{n\geq 1}$ is a Cauchy sequence. By~\eqref{eq: R eq iterated},
\begin{align*}
  \log(R(N_2+1))&-\log(R(N_1)) \\
  &=
  -\sum_{j=1}^m\sum_{n=N_1}^{N_2} \frac{\lambda_j\cos(\phi_j(n))\sin(2\theta(n))}{2\sin(k)n^{\alpha_j}} + O\Biggl(\,\sum_{n=N_1}^{N_2}\!\Bigl[\frac{1}{n^{2\alpha_1}} + |V_0(n)|\Bigr]\!\Biggr)\,.
\end{align*}
Since by assumption $\{V_0(n)\}_{n\geq 1}\in \ell^1$ and $\alpha_1>1/2$ the quantity in the error term is $o_{N_1 \to \infty}(1)$ and in particular it constitutes a Cauchy sequence.

To see that the main contribution is also Cauchy we argue as follows. Consider a fixed $j$ and write the corresponding sum as
\begin{align*}
  \sum_{n=N_1}^{N_2} &\frac{\cos(\phi_j(n))\sin(2\theta(n))}{n^{\alpha_j}}\\ &= \frac{1}{2}\Im\Biggl[\sum_{n=N_1}^{N_2} \frac{e^{i(\phi_j(n)+2\theta(n))}}{n^{\alpha_j}}\Biggr]-\frac{1}{2}\Im\Biggl[\sum_{n=N_1}^{N_2} \frac{e^{i(\phi_j(n)-2\theta(n))}}{n^{\alpha_j}}\Biggr]\\
  &=
  \frac{1}{2}\Im\Biggl[\sum_{n=N_1}^{N_2} \frac{e^{i(\phi^0_j(n)+2kn+\phi_j^1(n)+2\eta(n))}}{n^{\alpha_j}}\Biggr]-\frac{1}{2}\Im\Biggl[\sum_{n=n_1}^{N_2} \frac{e^{i(\phi^0_j(n)-2kn+\phi_j^1(n)-2\eta(n))}}{n^{\alpha_j}}\Biggr]\,.
\end{align*}
Since the argument is identical for both terms, we consider only the first.

Define $\phi, h$ by $\phi(n) = \phi_j^0(n)+2kn$ and $h(n) = \phi_j^1(n)+2\eta(n)$. By our assumption on $\phi_j^0$, $\phi$ satisfies the assumptions of Theorem~\ref{thm: main exp sum bound} with $\beta=\beta_j$ and $\omega=\omega_j$. Similarly, by the assumptions on $\phi_j^1$ and the bound~\eqref{eq: eta continuity bound} for $\eta$ we conclude that
\begin{equation*}
  |h(n+1)-h(n)| \lesssim n^{-\min\{\gamma_j, \alpha_1\}} + |V_0(n)|
\end{equation*}
so $h$ satisfies the assumptions of Theorem~\ref{thm: main exp sum bound} with $\gamma = \min\{\gamma_j, \alpha_1\}$ and $v_n = |V_0(n)|$. Let $\{Y_l\}_{l \geq 1}$ be defined as in the theorem, set $L_1 = \min\{l: Y_l> N_1\}$ and $L_2 = \max\{l: Y_l< N_2\}$. Then
\begin{align*}
  \sum_{n=N_1}^{N_2} \frac{e^{i(\phi^0_j(n)+2kn+\phi_j^1(n)+2\eta(n))}}{n^{\alpha_j}}
  &=
  \sum_{l=L_1}^{L_2}\sum_{Y_{l-1}< n\leq  Y_l} \frac{e^{i(\phi^0_j(n)+2kn+\phi_j^1(n)+2\eta(n))}}{n^{\alpha_j}}\\
  &\quad
  +\sum_{N_1\leq n \leq Y_{L_1}} \frac{e^{i(\phi^0_j(n)+2kn+\phi_j^1(n)+2\eta(n))}}{n^{\alpha_j}}\\
  &\quad 
  +\sum_{Y_{L_2}< n\leq N_2} \frac{e^{i(\phi^0_j(n)+2kn+\phi_j^1(n)+2\eta(n))}}{n^{\alpha_j}}\,.
\end{align*}
The last two sums are by the trivial bound no larger than $O(L_1^{(2-\beta_j-\alpha_j)/(\beta_j-1)})=o(L_1^{-1/2})$, since $\alpha_j>\beta_j/2$. By applying Theorem~\ref{thm: main exp sum bound} to the remaining sums we conclude that, for any $\eps>0$,
\begin{align*}
  \biggl|\sum_{l=L_1}^{L_2}\sum_{Y_{l-1}< n\leq  Y_l} \frac{e^{i(\phi^0_j(n)+2kn+\phi_j^1(n)+2\eta(n))}}{n^{\alpha_j}}\biggr|
  &\lesssim 
  (1+\|V_0\|_{\ell^1})\sum_{l=L_1}^{L_2}
  l^{\frac{2-\beta_j-2\alpha_j}{2(\beta_j-1)}}\\
  &\quad + \sum_{l=L_1}^{L_2}l^{\frac{2-\beta_j-\alpha_j-\min\{\gamma_j, \alpha_1\}}{\beta_j-1}+\eps}\,.
\end{align*}
By assumption $\alpha_j>\beta_j/2$, so $\tfrac{2-\beta_j-2\alpha_j}{2(\beta_j-1)}<-1$ and therefore the first sum is $o_{L_1\to \infty}(1)$. Moreover, since $\alpha_j\geq \alpha_1>1/2$ and $\gamma_j>1-\alpha_j$ we can choose $\eps>0$ so that for each $j$
\begin{equation*}
  \frac{2-\beta_j-\alpha_j-\min\{\gamma_j, \alpha_1\}}{\beta_j-1}+\eps<\frac{2-\beta_j - \alpha_j - (1-\alpha_j)}{\beta_j-1} = -1\,.
\end{equation*}
Thus also the second sum is $o_{L_1\to \infty}(1)$. Since $L_1 \to \infty$ as $N_1\to \infty$ we conclude that $\{\log(R(n))\}_{n\geq 1}$ is Cauchy, which completes the proof of Theorem~\ref{thm: pure ac}.\qed

\appendix

\section{On Kr\"uger's Proposition 3.2}
\label{app: correction to Kruger}

Let $V_{\omega, \alpha, \beta}(n) = \frac{\cos(2\pi \omega n^\beta)}{n^\alpha}$. Assume that $0<\alpha<\frac{1}{2}- \frac{\beta-1}{2}$ and $1<\beta <2$. Introduce the intervals
\begin{align*}
  \Lambda_k^0 &= [2^k, 2^{k+1}]\\
  \Lambda_k^- &= [2^{k-1}, 2^{k}-1]\\
  \Lambda_k^+ &= [2^{k+1}+1, 2^{k+1}+2^{k-1}]
\end{align*}
and set $\Lambda_k = \Lambda_k^- \cup \Lambda_k^0 \cup \Lambda_l^+$.

\begin{proposition}
  Fix $\alpha, \beta$ as above and $\omega, \lambda\in \R$, and $\omega' \in [0, 1)$.
  Set $\eps = \frac{2-\beta-2\alpha}{6}>0$.
  Then for $k$ large enough there exists intervals $I^\pm \subseteq \Lambda_k^\pm$ satisfying
  \begin{enumerate}
    \item $|\#(I^\pm) - 2^{(\alpha + \eps)k+1}|\lesssim 1$, and
    \item $\|\lambda V_{\omega, \alpha, \beta} - 2\lambda_\pm' \cos(2\pi(\phi_\pm + \omega' \cdot))\|_{L^\infty(I^\pm)} \lesssim 2^{-(\alpha +4\eps)k}$ for some $\phi_\pm \in \R$ and with $\lambda_\pm' \in [ \frac{\lambda}{2^{(k-2)\alpha}} , \frac{\lambda}{2^{(k+1)\alpha}}].$
  \end{enumerate}
\end{proposition}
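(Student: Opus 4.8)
The plan is to linearize the rapidly oscillating phase on a short sub-interval of $\Lambda_k^\pm$ whose centre is chosen so that the linearized frequency matches $\omega'$ modulo $1$. Writing $n=n_0+t$ for a reference point $n_0$ to be selected in the interior of $\Lambda_k^\pm$, I would Taylor expand
\begin{equation*}
  2\pi\omega n^\beta = 2\pi\omega n_0^\beta + 2\pi\omega\beta n_0^{\beta-1}(n-n_0) + R(n),\qquad R(n)=\pi\omega\beta(\beta-1)\xi^{\beta-2}(n-n_0)^2,
\end{equation*}
with $\xi$ between $n_0$ and $n$. The local frequency is $c(n_0):=\omega\beta n_0^{\beta-1}$. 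First I would choose $n_0$ so that $c(n_0)=\omega'+m$ for some $m\in\Z$: since $c'(n_0)=\omega\beta(\beta-1)n_0^{\beta-2}$ has constant sign and size $\asymp 2^{k(\beta-2)}$ on $\Lambda_k^\pm$, the map $n_0\mapsto c(n_0)$ is monotone and sweeps a range of length $\asymp 2^{k(\beta-1)}\gg 1$ as $n_0$ runs over $\Lambda_k^\pm$; by the intermediate value theorem it attains every value of $\omega'+\Z$ in that range. Because the target interval will have length $\ell:=2^{(\alpha+\eps)k+1}\ll 2^{k-1}\asymp|\Lambda_k^\pm|$, I can take such an $n_0$ at distance $\geq\ell$ from the endpoints and set $I^\pm=\{n\in\Z:|n-n_0|\leq\ell/2\}\subseteq\Lambda_k^\pm$; counting integer points gives $|\#(I^\pm)-\ell|\lesssim 1$, which is assertion (1).

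With $n_0$ fixed, set $\phi_\pm:=\omega(1-\beta)n_0^\beta$ (equivalently $\omega n_0^\beta-c(n_0)n_0$) and $\lambda_\pm':=\tfrac{\lambda}{2n_0^\alpha}$, so that $2\lambda_\pm'$ matches the amplitude $\tfrac{\lambda}{n_0^\alpha}$ at the centre. Since $c(n_0)=\omega'+m$ and $n,m\in\Z$, the linear term satisfies $2\pi c(n_0)(n-n_0)\equiv 2\pi\omega' n - 2\pi\omega' n_0 - 2\pi m n_0\pmod{2\pi}$, so the integer multiple $2\pi m n$ drops out and the cosine of the linearized phase equals $\cos(2\pi(\phi_\pm+\omega' n))$ exactly, the leftover constants being absorbed into $\phi_\pm$. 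It then remains to control the amplitude and phase errors. Using $|\cos a-\cos b|\leq|a-b|$ one gets on $I^\pm$
\begin{equation*}
  \Bigl|\frac{\lambda}{n^\alpha}-\frac{\lambda}{n_0^\alpha}\Bigr|\lesssim \frac{|\lambda|}{n_0^\alpha}\,\frac{|n-n_0|}{n_0}\lesssim 2^{-k\alpha}2^{(\alpha+\eps-1)k}=2^{(\eps-1)k}
\end{equation*}
for the amplitude, and $\tfrac{\lambda}{n_0^\alpha}|R(n)|\lesssim 2^{-k\alpha}2^{k(\beta-2)}\ell^2\lesssim 2^{k(\beta-2+\alpha+2\eps)}$ for the phase. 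A direct check using $5\beta+4\alpha>4$ shows the amplitude term is dominated, and the choice $\eps=\frac{2-\beta-2\alpha}{6}$ makes $\beta-2+\alpha+2\eps=-(\alpha+4\eps)$, yielding exactly the bound $2^{-(\alpha+4\eps)k}$ of assertion (2). Finally $\lambda_\pm'\asymp\lambda 2^{-k\alpha}$ since $n_0\in\Lambda_k^\pm$, placing it in the stated range.

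The main obstacle is the frequency-matching step together with the simultaneous reconciliation of three competing scales: the length $\ell\asymp 2^{(\alpha+\eps)k}$ must be (i) much smaller than $|\Lambda_k^\pm|\asymp 2^{k-1}$ so that $I^\pm$ fits, (ii) small enough that the quadratic remainder $2^{k(\beta-2)}\ell^2$ stays below the tolerance, while (iii) the full block $\Lambda_k^\pm$ is long enough that the local frequency sweeps $\gg 1$ integer periods and hence hits the residue $\omega'\bmod 1$. These requirements reduce to the exponent inequalities $\alpha+\eps<1-\tfrac{\beta}{2}<2-\beta$ and $\beta>1$, which hold precisely when $1<\beta<2$ and $0<\alpha<\tfrac{2-\beta}{2}$; checking that the single value $\eps=\frac{2-\beta-2\alpha}{6}$ satisfies all of them at once (and saturates the phase error) is the crux. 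Everything else is routine Taylor estimation and integer-point counting.
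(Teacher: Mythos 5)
Your proposal is correct and follows essentially the same route as the paper: linearize the phase on a window of length $\asymp 2^{(\alpha+\eps)k}$ centred at a point where the local frequency $\omega\beta n_0^{\beta-1}$ matches $\omega'$ modulo $1$, then verify the same two exponent inequalities ($\beta-2+2\alpha+6\eps=0$ for the quadratic phase remainder and $5\beta+4\alpha>4$ for the amplitude variation). The only cosmetic difference is that you achieve exact frequency matching at a real point $n_0$ via the intermediate value theorem, whereas the paper takes an integer $\hat m$ with the frequency matched only up to an error $O(2^{(k-1)(\beta-2)})$ that is absorbed into the phase remainder; both correctly use that the frequency sweeps a range $\gg 1$ only over a block of length $\gtrsim 2^{(2-\beta)k}$, which is the point where Kr\"uger's original argument fails.
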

\begin{remark}
  Note that the statement is weaker than that claimed by Kr\"uger in two regards; the length of the interval where the approximation holds is smaller, and the $L^\infty$-bound on that interval is weaker. However, when it comes to the application of the lemma in the proof of Kr\"uger's main result it is good enough. Indeed, the important part is that the interval where the $L^\infty$ error is much smaller than the size of the gaps in the spectrum of the respective almost Mathieu operators around $\pm 2\cos(\pi \omega')$ has a length which is much larger than the reciprocal of the gap. Since away from $E\in \{-2, 0, 2\}$ the gaps in the spectrum of the almost Mathieu operators are $\gtrsim \lambda_\pm'$ (if $\lambda_\pm'$ is small enough) this is the case with the bound in the proposition.
\end{remark}

\begin{proof}
  (The deviation from Kr\"uger's proof is only in details he omits).
  
  The arguments for $I^\pm$ are almost identical so we consider only the case of $I^-$. Define $c = \lfloor \frac{2^{k-1}+2^k-1}{2}\rfloor$, the centre of $\Lambda^-$, and set $\omega_m = \omega\beta m^{\beta-1}$. While Kr\"uger's statement that
  \begin{equation*}
    \frac{d}{dx} x^{\beta-1} = O(x^{\beta-2})
  \end{equation*}
  is trivial, his claim that $\omega_{c+2^{\epsilon k/4}}-\omega_c \to \infty$ is wrong in the considered range of $\alpha, \beta$. Indeed, the difference $\omega_{c+y}-\omega_c$ is bounded as long as $y = O(2^{(2-\beta)k})$. 

  For $\delta>0$ to be determined we have that $\omega_{c+2^{(2-\beta+\delta)k}}-\omega_c \to \infty$. Thus we conclude that there for $k$ large enough exists $\hat m$ with
  \begin{equation*}
     |\hat m -c | \leq 2^{(2-\beta+\delta)k} \quad \mbox{and}\quad (\omega_{\hat m} - \omega') \mod 1\lesssim 2^{(k-1)(\beta-2)}\,.
  \end{equation*}  

  Set $\ell = \lceil 2^{(\alpha+ \eps)k}\rceil$ for $\eps$ as in the proposition and define $I^- = [\hat m-\ell, \hat m + \ell]$. If $\max\{2-\beta+\delta, \alpha + \eps\} <1$ then $I^- \subset \Lambda^-$. In particular, we need to choose $\delta <\beta-1$ and check that $\alpha + \eps < 1$. Since $\alpha + 3\eps = 1-\beta/2<1$ the second requirement is ok.

  It remains to prove that $\delta, \phi_-$, and $\lambda_-$ can be chosen such that the two potentials are close in $L^\infty(I^-)$.

  For $n \in I^-$ a Taylor expand around $\hat m$ implies
  \begin{align*}
    \lambda \frac{\cos(2\pi \omega n^\beta)}{n^\alpha} 
    &=
    \lambda \frac{\cos(2\pi( \omega \hat m^{\beta}+ \omega_{\hat m}(n-\hat m)+ O(2^{(k-1)(\beta-2)}\ell^2)))}{\hat m^\alpha + O(2^{(k-1)(\alpha-1)}\ell)} \\
    &=
     \lambda \frac{\cos(2\pi( \omega \hat m^{\beta}-\omega' \hat m +\omega' n)+ O(2^{(k-1)(\beta-2)}\ell^2))}{\hat m^\alpha + O(2^{(k-1)(\alpha-1)}\ell)} \\
    &=
    \lambda \frac{\cos(2\pi( \omega \hat m^{\beta}-\omega' \hat m +\omega' n))}{\hat m^\alpha}  + O\Bigl(\frac{2^{(k-1)(\beta-2)}\ell^2}{\hat m^\alpha} + \frac{2^{(k-1)(\alpha-1)}\ell}{\hat m^{2\alpha}}\Bigr)\,.
  \end{align*}
  Setting $\phi_- = \omega \hat m^\beta -\omega' \hat m$ and $\lambda_-' = \frac{\lambda}{2\hat m^\alpha}$ we conclude that 
  \begin{align*}
    \|\lambda V_{\omega, \alpha, \beta} - 2\lambda'_- \cos(2\pi (\phi_- +\omega' \cdot))\|_{L^\infty(I^-)} 
    &\lesssim
    \frac{2^{(k-1)(\beta-2)}\ell^2}{\hat m^\alpha} + \frac{2^{(k-1)(\alpha-1)}\ell}{\hat m^{2\alpha}}\\
    &=
    2^{(k-1)(\beta-2)+ 2k(\alpha + \eps) - (k-1) \alpha}\\
    &\quad  + 2^{(k-1)(\alpha-1)+k(\alpha+\eps) -2\alpha (k-1)}
  \end{align*}
  For both terms to go to zero faster than $2^{-(\alpha + 4\eps)k}$ we need verify that
  \begin{align*}
    \beta-2+2\alpha + 2\eps-\alpha \leq -\alpha-4\eps \quad \mbox{and} \quad \alpha-1 +\alpha +\eps -2\alpha \leq -\alpha-4\eps\,.
  \end{align*}
  The first inequality is easily checked to hold, in fact it is an equality, and the second is equivalent to
  \begin{align*}
    4-4\alpha-5\beta <0
  \end{align*}
  which is true in the considered parameter range. Since the only assumption concerning $\delta$ is that $0<\delta < \beta-1$ we can choose $\delta = \frac{\beta-1}{2}$, completing the proof.
\end{proof}

% ------------------  Bibliography  --------------------

\bibliographystyle{amsplain}
\def\myarXiv#1#2{\href{http://arxiv.org/abs/#1}{\texttt{arXiv:#1\,[#2]}}}

\end{document}